\documentclass[11pt]{amsart}

\usepackage{amsmath}

\usepackage{amsfonts}

\usepackage{amssymb}

\setlength{\oddsidemargin}{+5pt}

\setlength{\evensidemargin}{+5pt}

\setlength{\textwidth}{150mm}

\newtheorem{lem}{Lemma}[section]

\newtheorem{teo}[lem]{Theorem}

\newtheorem{cor}[lem]{Corollary}

\newtheorem{defi}[lem]{Definition}

\newcommand{\dive}{\mathrm{div}}

\renewcommand{\phi}{\varphi}

\newcommand{\R}{{\mathbb{R}}}

\newcommand{\bbR}{{\mathbb{R}}}

\newcommand{\average}{{\mathchoice {\kern1ex\vcenter{\hrule height.4pt

width 6pt

depth0pt} \kern-9.7pt} {\kern1ex\vcenter{\hrule height.4pt width 4.3pt

depth0pt}

\kern-7pt} {} {} }}

\title[Rigidity results for
elliptic
boundary value problems]{Rigidity results \\
for elliptic
boundary value problems:\\
stable solutions for quasilinear equations \\
with Neumann or Robin boundary conditions}

\thanks{Part of this work was done while A. P. was
visiting the Dipartimento di Matematica ``Federigo Enriques''
of the Universit\`a di Milano.
The authors are members
of {\em Gruppo Nazionale per l'Analisi
Ma\-te\-ma\-ti\-ca, la Probabilit\`a e
le loro Applicazioni} (GNAMPA) of the
{\em Istituto Nazionale di Alta Matematica} (INdAM). The first and third authors
are supported by the Australian Research Council grant
``N.E.W.'' {\em Nonlocal Equation at Work}.}

\author{Serena Dipierro, Andrea Pinamonti, Enrico Valdinoci}

\address{Dipartimento di Matematica ``Federigo Enriques'',
Universit\`a degli studi di Milano,
Via Saldini 50, 20133 Milano (Italy).}

\email{serena.dipierro@unimi.it}

\address{Dipartimento di Matematica, Universit\`a di Trento,
Via Sommarive 14, 38050 Povo, Trento (Italy).}

\email{Andrea.Pinamonti@gmail.com}

\address{Dipartimento di Matematica ``Federigo Enriques'',
Universit\`a degli studi di Milano,
Via Saldini 50, 20133 Milano (Italy), and
School of Mathematics and Statistics,
University of Melbourne, Grattan Street, 
Parkville, VIC-3010 Melbourne (Australia), and
Istituto di Matematica Applicata e Tecnologie Informatiche,
Via Ferrata 1, 27100 Pavia (Italy).
}

\email{enrico.valdinoci@unimi.it}

\begin{document}

\begin{abstract}
We provide a general approach to the classification results
of stable solutions of (possibly nonlinear) elliptic problems
with Robin conditions.

The method is based on a geometric formula of Poincar\'e type,
which is
inspired by a classical work of Sternberg and
Zumbrun and which
gives an accurate description of the curvatures
of the level sets of the stable solutions. {F}rom this, we show that the stable solutions
of a quasilinear problem with Neumann data are necessarily constant.

As a byproduct of this, we obtain an alternative proof of
a celebrated result of Casten and Holland, and Matano.

In addition, we will obtain as a consequence a new proof of
a result recently established by Bandle, Mastrolia, 
Monticelli and Punzo.
\end{abstract}

\maketitle

\tableofcontents

\section{Introduction}  

The study of stable solutions of variational problems is a classical topic
of investigation. Roughly speaking, stable solutions are critical points
of an energy functional with positive second variation (in particular,
local minimizers of the energy are stable solutions).

Given their special energetic properties, stable solutions often enjoy
better qualitative and quantitative properties than the other solutions
and, in some cases, they can be
completely classified. We refer to the monograph~\cite{DUP} and the references
therein for a complete presentation of the main results available on stable solutions.

The goal of this paper is to provide an approach towards the classification
of stable solutions with Neumann and Robin boundary conditions
which is based on a geometric Poincar\'e formula.
This formula is inspired by the one
originally introduced by Sternberg and
Zumbrun in~\cite{SZ1, SZ2} and provides an accurate bound on the second fundamental
form and on the tangential derivatives of the level sets of any stable solution,
in a form which is remarkably independent from the nonlinearity.
For this reason, we think that the geometric formula obtained
is interesting in itself and reveals important information
on the level sets of the solutions.

With this approach, we obtain a classification result for (possibly
nonlinear) elliptic problems
with Neumann boundary conditions
(these operators can be also singular or degenerate,
and we comprise the cases of the $p$-Laplacian and of the mean curvature equation). This result contains, as a particular case,
the classical one obtained by Casten and Holland in~\cite{CH} and by Matano in~\cite{Mat}, for which
we produce a different proof (in fact, with weaker regularity assumptions).

In addition, the method also gives a classification result
that has been recently obtained by Bandle, Mastrolia,
Monticelli and Punzo in~\cite{punzo}, see also \cite{DPV}.\medskip

The mathematical framework in which we work is the following.
Let $\Omega\subset \bbR^n$ be a bounded domain with smooth boundary.
We study the solutions to the following boundary value problem:
\begin{align}\label{eqncomp}
\bigg \{
\begin{array}{rl}
\dive(a(|\nabla u|)\nabla u)+f(u)=0& \mbox{in}\ \Omega, \\
a(|\nabla u|)\partial_{\nu}u+h(u)=0 & \mbox{on}\ \partial\Omega, \\
\end{array}
\end{align} 
where $f $, $h\in C^{1}(\bbR)$
and~$a\in C^{1,1}_{loc}((0,+\infty))$.
Here~$\nu$ denotes the external unit normal to~$\partial\Omega$.
We also require that the function $a$ satisfies 
the following structural conditions:
\begin{align}\label{cond1}
&a(t)>0\quad \mbox{for any}\ t\in (0,+\infty),\\
\label{cond2}
&a(t)+a'(t)t>0\quad \mbox{for any}\ t\in (0,+\infty).
\end{align}
We observe that the general form of \eqref{eqncomp}
and the structural conditions~\eqref{cond1} and~\eqref{cond2}
encompass, as very special cases,
many elliptic singular and degenerate equations.
Indeed, if~$a(t):=t^{p-2}$, with~$1<p<+\infty$,
or~$a(t):=1/ \sqrt{1+t^2}$, then we obtain the
$p$-Laplacian and the mean curvature equations respectively.

Following \cite{FarHab, FSV, CNP}, we
define~$A:\bbR^n\to Mat(n\times n)$
and~$\lambda_1\in C^0((0,+\infty))$ as follows
\begin{align}\label{DefA}
A_{hk}(\xi):=\frac{a'(|\xi|)}{|\xi|}\xi_h\xi_k+a(|\xi|)\delta_{hk}\quad \mbox{for any}\  1\leq h,k\leq n,
\end{align}
\begin{align}\label{defa}
\lambda_1(t):=a(t)+a'(t)t\quad \mbox{for any}\  t>0.
\end{align}

\begin{defi} We say that $u$ is a weak solution to~\eqref{eqncomp}
if $u\in C^1(\overline{\Omega})$,
\begin{align}\label{weak}
\int_{\Omega}a(|\nabla u|)\left\langle \nabla u,
\nabla\varphi\right\rangle \,dx+\int_{\partial\Omega}
 h(u)\, \varphi\,d\sigma
-\int_{\Omega} f(u)\varphi\,dx =0,
\end{align}
for any~$\varphi\in C^1(\overline{\Omega})$,
and either (A1) or (A2) is satisfied, where
\begin{itemize}
\item[(A1)] $\{\nabla u=0\}=\varnothing$;
\item[(A2)]  $a\in C^0([0,+\infty))$
and
\[
\mbox{the map}\quad t\to ta(t)\quad \mbox{belongs to}\quad C^1([0,+\infty)).
\]
\end{itemize}
\end{defi}

\medskip

Notice that the first integrand in~\eqref{weak} is well-defined, thanks to either (A1) or (A2)
(recall the boundary condition in~\eqref{eqncomp}). 

\smallskip

We observe that, in general, 
equation~\eqref{eqncomp} may have no solution. For instance,
in the mean curvature equation case in which~$a(t) := \frac1{\sqrt{1+t^2}}$,
a compatibility condition was discovered in~\cite{GI},
see also Theorem~1.3 and Remark~1.4 in~\cite{MWW}: in
particular, if~$f$ is a nonzero constant and~$h:=0$, there is no solution.
\smallskip

The regularity assumption $u\in C^1(\overline{\Omega})$
is always fulfilled in many important cases,
like those involving the
$p$-Laplacian operator or the mean curvature
operator (see e.g.~\cite{Lieb}).

In light of this, and in view of the great generality
of the functions~$a$ and~$h$,
it is natural to work in the above setting.

\begin{defi}\label{phstab}
Let $u$ be a weak solution to~\eqref{eqncomp}.
We say that~$u$ is stable if
\begin{align}\label{hstab}
\int_{\Omega}\left\langle
A(\nabla u)\nabla\varphi,\nabla\varphi
\right\rangle\,dx+
\int_{\partial\Omega}h'(u)\,\varphi^2\,d\sigma
-\int_{\Omega} f'(u)\varphi^2\,dx\ge 0,
\end{align}
for any~$\varphi\in C^{1}(\overline{\Omega})$.
\end{defi} 

We notice that, as customary, from the variational point of view,
such definition of stability is equivalent to the fact that the associated
energy functional is nonnegative defined (nevertheless, we do not need
to explicitly introduce such energy
setting, since the framework provided
by equation~\eqref{eqncomp}, combined with the condition in~\eqref{hstab},
is sufficient for our purposes).\medskip

In the subsequent formula~\eqref{prepoi2}, we give
an extension of the geometric formula obtained
in~\cite{SZ1, SZ2}. Such formula relates
the stability of the equation with the
principal curvatures of the corresponding
level set and with the tangential
gradient of the solution. Since this formula
bounds a weighted~$L^2$-norm of
any~$\varphi\in C^1(\overline{\Omega})$ plus a boundary
term by a weighted~$L^2$-norm of its gradient,
we may see it as a weighted Poincar\'e type inequality.

The first result towards a geometric Poincar\'e inequality
is the following:

\begin{teo}\label{TH:P}
Let $u\in C^2(\overline{\Omega})$ be a stable weak solution
to~\eqref{eqncomp}. 
Then, for every~$\varphi\in C^1(\overline{\Omega})$, it holds
\begin{equation}\begin{split}\label{15imp}
&\int_{\{\nabla u\neq 0\}\cap \Omega}
\Big[\sum_{i=1}^n\left\langle
A(\nabla u)\nabla u_i, \nabla u_i \right\rangle-\left\langle A(\nabla u)
\nabla|\nabla u|,
\nabla|\nabla u|\right\rangle
\Big]\varphi^2\, dx\\&\qquad
+\int_{\partial\Omega} \Big(f(u)\,\partial_\nu u
- a(|\nabla u|)\left\langle \nabla u, \partial_{\nu}
\nabla u \right\rangle
-h(u)\,\Delta u-h'(u)|\nabla u|^2\Big)\varphi^2\,d\sigma
\\
\leq\;& \int_{\Omega}|\nabla u|^2\left\langle A(\nabla u)
\nabla\varphi,\nabla\varphi\right\rangle\, dx
.
\end{split}\end{equation}
\end{teo}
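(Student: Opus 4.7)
The plan is to follow the Sternberg--Zumbrun strategy, suitably adapted to the quasilinear Robin/Neumann setting. First, I would differentiate the bulk equation in each coordinate direction $x_i$: using the definition \eqref{DefA} of $A$, one sees that $A(\nabla u)\nabla u_i = \partial_i\bigl(a(|\nabla u|)\nabla u\bigr)$, so the linearised identity
\[
\dive\bigl(A(\nabla u)\nabla u_i\bigr)+f'(u)\,u_i=0
\]
holds (pointwise on $\{\nabla u\neq 0\}$). Then I would multiply by $u_i\,\varphi^2$, integrate over $\Omega$, sum over $i$, and integrate by parts, invoking the bookkeeping identity
\[
\sum_i u_i\,A(\nabla u)\nabla u_i \;=\; A(\nabla u)\,H\nabla u \;=\; |\nabla u|\,A(\nabla u)\nabla|\nabla u|\qquad \text{on $\{\nabla u\neq 0\}$},
\]
where $H$ denotes the Hessian of $u$. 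This produces the identity
\[
\int_\Omega \sum_i\langle A\nabla u_i,\nabla u_i\rangle\varphi^2\,dx + 2\int_\Omega |\nabla u|\,\varphi\,\langle A\nabla|\nabla u|,\nabla\varphi\rangle\,dx = \int_{\partial\Omega}\sum_i u_i\langle A\nabla u_i,\nu\rangle\varphi^2\,d\sigma + \int_\Omega f'(u)\,|\nabla u|^2\varphi^2\,dx.
\]

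Next, I would test the stability inequality \eqref{hstab} with $\varphi|\nabla u|$ in place of its free test function. Expanding $\nabla(\varphi|\nabla u|)=|\nabla u|\nabla\varphi+\varphi\nabla|\nabla u|$, the quadratic form $\langle A\nabla(\varphi|\nabla u|),\nabla(\varphi|\nabla u|)\rangle$ produces three contributions whose mixed term is precisely the $2\int_\Omega|\nabla u|\,\varphi\,\langle A\nabla|\nabla u|,\nabla\varphi\rangle\,dx$ already appearing above. Subtracting the integration-by-parts identity from the stability inequality therefore cancels this mixed term as well as the bulk $\int_\Omega f'(u)|\nabla u|^2\varphi^2\,dx$, leaving exactly the bulk integrand on the left of \eqref{15imp} and the term $\int_\Omega|\nabla u|^2\langle A\nabla\varphi,\nabla\varphi\rangle\,dx$ on the right, plus a remaining boundary integral equal to $\int_{\partial\Omega}\bigl(\sum_i u_i\langle A\nabla u_i,\nu\rangle + h'(u)|\nabla u|^2\bigr)\varphi^2\,d\sigma$.

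It then remains to match this boundary integrand with the one in \eqref{15imp}. Direct expansion using \eqref{DefA} yields
\[
\sum_i u_i\langle A\nabla u_i,\nu\rangle = |\nabla u|\langle A\nabla|\nabla u|,\nu\rangle = a'(|\nabla u|)\,\partial_\nu u\,\langle\nabla u,\nabla|\nabla u|\rangle + a(|\nabla u|)\,\langle\nabla u,\partial_\nu\nabla u\rangle,
\]
where in the second equality I use $|\nabla u|\,\partial_\nu|\nabla u|=\langle\nabla u,\partial_\nu\nabla u\rangle$. Evaluating the bulk PDE on $\partial\Omega$ gives $a'(|\nabla u|)\langle\nabla u,\nabla|\nabla u|\rangle=-f(u)-a(|\nabla u|)\Delta u$, and the Robin condition gives $a(|\nabla u|)\,\partial_\nu u=-h(u)$, whence $a(|\nabla u|)\,\partial_\nu u\,\Delta u=-h(u)\Delta u$. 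Substituting turns the boundary integrand into $-f(u)\,\partial_\nu u + a(|\nabla u|)\langle\nabla u,\partial_\nu\nabla u\rangle + h(u)\Delta u + h'(u)|\nabla u|^2$, which, transferred to the left-hand side with the opposite sign, is exactly the boundary integrand appearing in \eqref{15imp}.

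The main obstacle I anticipate is the admissibility of the test function $\varphi|\nabla u|$ in the stability inequality under hypothesis (A2), since $|\nabla u|$ is in general only Lipschitz where $\nabla u$ vanishes and thus not $C^1$ on all of $\overline\Omega$. The standard remedy is to regularise, for instance by replacing $|\nabla u|$ with $\bigl(|\nabla u|^2+\varepsilon\bigr)^{1/2}$ or by $|\nabla u|\,\eta_\varepsilon(|\nabla u|)$ for a suitable cutoff $\eta_\varepsilon$, to run the computation above for the regularised test function (all pointwise identities used localise naturally to $\{\nabla u\neq 0\}$), and then to pass to the limit $\varepsilon\to 0$; since the bulk integrand in \eqref{15imp} is already restricted to $\{\nabla u\neq 0\}$, every quantity converges by dominated convergence. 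Under (A1) no such approximation is required.
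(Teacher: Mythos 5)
Your proposal is correct and follows essentially the same Sternberg--Zumbrun strategy as the paper: the linearised equation tested against $u_i\varphi^2$ (summed over $i$) combined with the stability inequality tested against $|\nabla u|\varphi$, with the degenerate set $\{\nabla u=0\}$ handled by localisation and a Stampacchia-type argument. The only cosmetic difference is in the boundary bookkeeping --- the paper reads off the boundary integrand directly from the weak linearised identity of Lemma~\ref{eqnlin} together with the Robin condition, whereas you additionally evaluate the bulk PDE on $\partial\Omega$ to eliminate $a'(|\nabla u|)\left\langle\nabla u,\nabla|\nabla u|\right\rangle$; both routes yield the same integrand, and at boundary points where $\nabla u=0$ (where $a'$ is undefined) that integrand vanishes in any case.
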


Concerning the regularity assumption on the solution
taken in Theorem~\ref{TH:P}, we think that it is an interesting problem to determine whether
similar results can be obtained under weaker regularity. This would be particularly
interesting in the case of $p$-Laplace-type equations, in which the gradient of the solution
is usually not better than H\"older continuous at critical points. On the one hand,
in our arguments, assuming the continuity of the second derivatives
makes the computations available ``up to the boundary'' and gives perfect 
sense of
the terms~$
a(|\nabla u|)\left\langle \nabla u, \partial_{\nu}
\nabla u \right\rangle$ and~$h(u)\,\Delta u$ along~$\partial\Omega$.
On the other hand, it is possible that a more careful analysis provides
a suitable meaning for these terms also in a less regular situation:
for instance, at points where~$h(u)\ne0$, the Robin boundary condition suggests
that~$\nabla u\ne0$, which makes the elliptic regularity available (thus
reducing to the smooth case), while at points where~$h(u)=0$
the term~$h(u)\,\Delta u$ formally disappears. Similarly, at critical points,
the term~$a(|\nabla u|)\left\langle \nabla u, \partial_{\nu}
\nabla u \right\rangle$ may disappear under weaker regularity assumptions
than those requested in
Theorem~\ref{TH:P},
or have a useful sign at least in the case of convex domains (see Lemma~\ref{LemConvex}).
These observations indeed suggest that the results presented in this paper
may be valid in further generality.
\medskip

We also observe that the integrand in the first line
of~\eqref{15imp}
has indeed a geometric interpretation in terms of the
curvatures of the level sets
of~$u$ and the tangential gradient.

For this, given a point~$x\in\R^n$, we let
\[
L_{u,x}:=\{y\in\bbR^n {\mbox{ s.t. }} u(y)=u(x)\}
\]
be the level set of~$u$ passing through~$x$.
Furthermore,
we denote by~$\nabla_{T}u$ the tangential gradient
of~$u$ along~$L_{u,x}\cap \{\nabla u\neq 0\}$,
and by~$k_1,\ldots, k_{n-1}$
the principal curvatures of~$L_{u,x}\cap \{\nabla u\neq 0\}$.

With this notation, we have the following:

\begin{cor}\label{COR:P}
Let $u\in C^2(\overline{\Omega})$ be a stable weak solution
to~\eqref{eqncomp}. 
Then, for every~$\varphi\in C^1(\overline{\Omega})$, it holds
\begin{equation}\begin{split}\label{prepoi2}
&\int_{\Omega}
\Big[\lambda_1|\nabla_T|\nabla u||^2
+a(|\nabla u|)|\nabla u|^2 \sum_{j=1}^{n-1} k_j^2\Big]\varphi^2\,dx
\\&\qquad
+\int_{\partial\Omega} \Big(f(u)\,\partial_\nu u
- a(|\nabla u|)\left\langle \nabla u, \partial_{\nu}
\nabla u \right\rangle
-h(u)\,\Delta u-h'(u)|\nabla u|^2\Big)\varphi^2\,d\sigma
\\
\leq\;& \int_{\Omega}|\nabla u|^2\left\langle A(\nabla u)
\nabla\varphi,\nabla\varphi\right\rangle\, dx
.\end{split}\end{equation}
\end{cor}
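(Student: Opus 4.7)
The plan is to deduce the corollary directly from Theorem~\ref{TH:P} by performing a pointwise algebraic manipulation of the integrand in the first line of~\eqref{15imp} on the open set~$\{\nabla u\neq 0\}$. Outside this set the geometric quantities $\sum_j k_j^2$ and $|\nabla_T|\nabla u||^2$ are not defined, but the integrand in the first line of \eqref{15imp} vanishes there, so the integral over~$\Omega$ coincides with that over~$\{\nabla u\neq 0\}$ and the replacement is legitimate.

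The first step is to unravel the two quadratic forms produced by~$A$. Using the definition in~\eqref{DefA} and the identity $(D^2u)\nabla u=|\nabla u|\,\nabla|\nabla u|$ (which follows from differentiating~$|\nabla u|^2$), a direct computation gives, on~$\{\nabla u\neq 0\}$,
\[
\sum_{i=1}^n\langle A(\nabla u)\nabla u_i,\nabla u_i\rangle
= a'(|\nabla u|)\,|\nabla u|\,|\nabla|\nabla u||^{2}+a(|\nabla u|)\,|D^{2}u|^{2},
\]
and, since $\nabla u\cdot\nabla|\nabla u|=|\nabla u|\,\partial_\nu|\nabla u|$ with $\nu:=\nabla u/|\nabla u|$,
\[
\langle A(\nabla u)\nabla|\nabla u|,\nabla|\nabla u|\rangle
= a'(|\nabla u|)\,|\nabla u|\,(\partial_\nu|\nabla u|)^{2}+a(|\nabla u|)\,|\nabla|\nabla u||^{2}.
\]

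The second step is the geometric identity of Sternberg--Zumbrun type. Decomposing the Hessian with respect to the splitting $\R^n=T_xL_{u,x}\oplus\R\nu$ and using that the principal curvatures $k_1,\dots,k_{n-1}$ are (up to sign) the eigenvalues of the restriction of $D^{2}u/|\nabla u|$ to $T_xL_{u,x}$, one obtains the standard pointwise formula
\[
|D^{2}u|^{2}-|\nabla|\nabla u||^{2}
= |\nabla u|^{2}\sum_{j=1}^{n-1}k_j^{2}+|\nabla_T|\nabla u||^{2},
\]
together with $|\nabla|\nabla u||^{2}-(\partial_\nu|\nabla u|)^{2}=|\nabla_T|\nabla u||^{2}$. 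Substituting both relations into the difference of the two quadratic forms above, the coefficients of $|\nabla_T|\nabla u||^{2}$ combine into $a(|\nabla u|)+a'(|\nabla u|)|\nabla u|=\lambda_1(|\nabla u|)$ by~\eqref{defa}, while the coefficient of $\sum_j k_j^{2}$ becomes $a(|\nabla u|)|\nabla u|^{2}$. Thus on $\{\nabla u\neq 0\}$ we obtain the identity
\[
\sum_{i=1}^{n}\langle A(\nabla u)\nabla u_i,\nabla u_i\rangle-\langle A(\nabla u)\nabla|\nabla u|,\nabla|\nabla u|\rangle
=\lambda_1\,|\nabla_T|\nabla u||^{2}+a(|\nabla u|)\,|\nabla u|^{2}\sum_{j=1}^{n-1}k_j^{2}.
\]

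Plugging this identity into the first line of~\eqref{15imp} produces \eqref{prepoi2}, since the boundary integral and the right-hand side are unchanged. There is no real obstacle in the argument; the only subtle point is the bookkeeping between the integrals over $\{\nabla u\neq 0\}\cap\Omega$ and over all of $\Omega$, which is harmless because both $\lambda_1|\nabla_T|\nabla u||^2$ and $a(|\nabla u|)|\nabla u|^2\sum_j k_j^2$ are understood to be extended by zero on the critical set, exactly as the left-hand side of~\eqref{15imp} already is.
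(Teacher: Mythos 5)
Your proposal is correct and follows essentially the same route as the paper: both reduce the corollary to the pointwise identity
$\sum_i\langle A\nabla u_i,\nabla u_i\rangle-\langle A\nabla|\nabla u|,\nabla|\nabla u|\rangle=\lambda_1|\nabla_T|\nabla u||^2+a(|\nabla u|)|\nabla u|^2\sum_j k_j^2$ on $\{\nabla u\neq0\}$ and then invoke the Sternberg--Zumbrun geometric formula~\eqref{eqfund}. The only difference is that you re-derive the intermediate algebraic identity (the paper's~\eqref{ugu}, quoted from Lemma~2.3 of~\cite{FSV}) by a direct computation with~\eqref{DefA}, which is a harmless and correct substitute for the citation.
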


As already mentioned, this type of formulas 
has been originally introduced by Sternberg and
Zumbrun in~\cite{SZ1, SZ2}, and then used to prove symmetry
and rigidity results in~\cite{FarHab, FSV, {ASV2}}.

Since then, this type of inequalities has been 
applied in several contexts:
to prove rigidity results for boundary reaction-diffusion equations,
see~\cite{sire1,sire2} (where a celebrated conjecture of De Giorgi
for equations driven by the fractional Laplacian is also proved
in dimension~2),
to handle
semilinear equations in Riemannian and Sub-Riemannian spaces,
see~\cite{FMV,fsv2,fsv1,FP,FV1,PV},
to study problems involving the Ornstein-Uhlenbeck operator,
see~\cite{CNV},
as well as semilinear equations with unbounded drift~\cite{CNP,FNP}
and systems of PDEs~\cite{fazly, DI, DP1,DP2}. 

Recently, in~\cite{DSV},
the case of Neumann boundary condition for boundary
reaction-diffusion equations was dealt with the use
of a Poincar\'e inequality involving also a boundary term. 
\medskip

Now, we present some rigidity results, in the spirit of~\cite{CH},
by taking advantage of the geometric information
given by 
formula~\eqref{prepoi2}. 
\medskip

We first deal with the case of the Neumann boundary
condition, that is we choose~$h:=0$ in~\eqref{eqncomp}.
In this setting we get the following result:

\begin{teo}\label{main1}
Let $\Omega\subset \bbR^n$ be a bounded domain
with smooth boundary. Suppose that~$\Omega$ is convex,
with strictly positive principal
curvatures along~$\partial\Omega$.

Let $u\in C^2(\overline{\Omega})$ be a stable weak
solution to~\eqref{eqncomp} with~$h:=0$. Then~$u$
is constant in $\Omega$.
\end{teo}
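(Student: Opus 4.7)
The strategy is to test the geometric Poincar\'e inequality \eqref{prepoi2} of Corollary \ref{COR:P} with the trivial choice $\varphi\equiv 1$. Doing so kills the right-hand side, so if I can show that both integrands on the left-hand side are pointwise nonnegative, they must vanish identically, which will force strong structural constraints on $u$.

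The interior integrand is nonnegative by \eqref{cond1} and \eqref{cond2}. For the boundary integrand, I will first use that $h\equiv 0$ together with the Neumann condition, the positivity of $a$ on $(0,+\infty)$, and the dichotomy (A1)/(A2), to conclude that $\partial_\nu u\equiv 0$ on $\partial\Omega$; this annihilates the contributions of $f(u)\,\partial_\nu u$, $h(u)\,\Delta u$ and $h'(u)|\nabla u|^2$. For the surviving term $-a(|\nabla u|)\langle\nabla u,\partial_\nu\nabla u\rangle$, the key observation is that $\partial_\nu u\equiv 0$ means $\nabla u$ is tangent to $\partial\Omega$, so differentiating the identity $\langle\nabla u,\nu\rangle=0$ along $\nabla u$ and using the symmetry of $D^2u$ yields
\[ \langle\nabla u,\partial_\nu\nabla u\rangle = -\mathrm{II}_{\partial\Omega}(\nabla u,\nabla u), \]
where $\mathrm{II}_{\partial\Omega}$ is the second fundamental form of $\partial\Omega$. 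The strict-convexity hypothesis (cf.\ Lemma \ref{LemConvex}) provides $c>0$ with $\mathrm{II}_{\partial\Omega}(\xi,\xi)\ge c|\xi|^2$ for every $\xi$ tangent to $\partial\Omega$, so the boundary integrand is bounded below by $c\,a(|\nabla u|)|\nabla u|^2\ge 0$.

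Both integrands being zero then yields three structural facts: on the regular set $S:=\{\nabla u\neq 0\}\cap\Omega$ all principal curvatures $k_j$ of the level sets vanish, so each connected component of a regular level set lies in an affine hyperplane; $\nabla_T|\nabla u|\equiv 0$, so $|\nabla u|$ is constant along each such component; and $\nabla u\equiv 0$ on $\partial\Omega$ (using positivity of $a$ to pass from $a(|\nabla u|)|\nabla u|^2=0$ to $|\nabla u|=0$). Finally I will argue $S=\varnothing$: if $x_0\in S$, the component of $L_{u,x_0}\cap S$ containing $x_0$ sits inside an affine hyperplane $H$ on which $|\nabla u|\equiv|\nabla u(x_0)|>0$. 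Extending this component maximally within $S$, it cannot terminate at an interior critical point, where $|\nabla u|$ would drop to $0$ by continuity, so boundedness of $\Omega$ forces its closure to meet $\partial\Omega$; but there $\nabla u=0$, contradicting constancy of $|\nabla u|$ along the component. Hence $S=\varnothing$ and $u$ is constant.

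The step I expect to be most delicate is this final continuation argument: one must rule out the possibility that the regular level-set component fails to be properly contained in $\Omega$ only by accumulating on the critical set, which I believe can be handled by a standard open-closed argument on the connected component of $L_{u,x_0}\cap H\cap\Omega$ intersected with $S$, using continuity of $|\nabla u|$. The boundary identity via the second fundamental form, while conceptually transparent, is also worth handling carefully: the sign convention, the tangency of $\nabla u$, and the regularity $u\in C^2(\overline\Omega)$ all have to line up, and it is here that strict convexity of $\Omega$ is genuinely used.
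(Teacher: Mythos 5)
Your proposal is correct, and its first half coincides with the paper's argument: test \eqref{prepoi2} with $\varphi\equiv1$, use the convexity of $\Omega$ to give the boundary term a sign (your second-fundamental-form identity is exactly the content of Lemma~\ref{LemConvex}), and deduce that $k_j=0$ and $\nabla_T|\nabla u|=0$ on $\{\nabla u\neq0\}\cap\Omega$, so that each regular level-set component is a relatively open piece of a hyperplane (the role of Lemma~\ref{lemn2}). Where you genuinely diverge is in the endgame. The paper extracts only $\langle\nabla u,\partial_\nu\nabla u\rangle=0$ on $\partial\Omega$ and first proves that $u$ is constant along $\partial\Omega$, by a path argument combined with the positive definiteness of the boundary Hessian at a hypothetical point of nonzero tangential derivative; it then propagates the boundary value inward, using that every hyperplane slice of the bounded convex $\Omega$ reaches $\partial\Omega$. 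You instead use the strict positivity of the principal curvatures quantitatively inside the integral inequality to get $\nabla u\equiv0$ on all of $\partial\Omega$ at once, and then close via the information $\nabla_T|\nabla u|=0$ (which the paper derives but uses only to verify hypothesis \eqref{spero} of Lemma~\ref{lemn2}): $|\nabla u|$ equals a positive constant on each regular level component, that component is a nonempty relatively clopen subset of a hyperplane slice of $\Omega$, hence non-compact, so its closure meets $\partial\Omega$, where $\nabla u=0$ --- a contradiction. The continuation step you flag does go through: any limit point of the component inside $\Omega$ carries $|\nabla u|=|\nabla u(x_0)|>0$ by continuity and so belongs to the component by maximality, making the component relatively closed in $\Omega$, while a nonempty open subset of an $(n-1)$-plane cannot be compact (note that the containment in a hyperplane is genuinely needed here, since without it a compact regular level component, e.g.\ a sphere, would never reach $\partial\Omega$). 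Your route buys the omission of the path-connectedness lemma on $\partial\Omega$ (Lemma~2.2 of \cite{DSV}) and of the identification of the constant boundary value; the paper's route uses the vanishing of $\nabla_T|\nabla u|$ less directly but keeps the boundary analysis pointwise and elementary.
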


Theorem~\ref{main1} is new even in the case of the $p$-Laplacian,
but our proof is robust enough to deal with a very general class
of operators. 

Also, we point out that Theorem~\ref{main1}
provides, as a byproduct, a new proof of Theorem~1
in~\cite{CH} (see also~\cite{Mat})
when the function~$a:=1$ and so the general
operator in~\eqref{eqncomp} boils down to the Laplacian.
\medskip

As a corollary of Theorem~\ref{TH:P}, we also 
give an alternative proof 
of Theorem~2.1 in~\cite{punzo} (see also~\cite{jimbo}). More precisely, 
we deal with the case of the Laplacian in dimension~2
with Robin boundary condition, and we obtain:

\begin{teo}\label{main2}
Let $\Omega\subset \bbR^2$ be a bounded domain
with smooth boundary. 
Let $u\in C^2(\overline{\Omega})$ be a
solution to~\eqref{eqncomp} with~$a:=1$
and~$h(u):=\alpha u$, for some~$\alpha\in\R$.

Suppose that
\begin{equation}\begin{split}\label{vediamo}
\int_{\partial\Omega}
\alpha^2u^2\Big(
\frac{f(u)}{\alpha u}-\kappa +\alpha\Big)\,d\sigma<0\qquad
{\mbox{and }}\qquad \alpha+\kappa\ge 0\; {\mbox{ on }}\partial\Omega,
\end{split}
\end{equation}
where~$\kappa$ is the curvature of~$\partial\Omega$.

Then~$u$ is unstable.
\end{teo}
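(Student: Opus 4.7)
The plan is to argue by contradiction: assume that $u$ is stable, and derive a contradiction from hypothesis~\eqref{vediamo}. I would apply Corollary~\ref{COR:P} with the constant test function $\varphi\equiv 1$. Since $a\equiv 1$ yields $\lambda_1\equiv 1$, and in $n=2$ the level sets of $u$ in $\{\nabla u\neq 0\}$ have a single principal curvature $k_1$, the first line of~\eqref{prepoi2} reduces to $\int_\Omega[\,|\nabla_T|\nabla u||^2+|\nabla u|^2 k_1^2\,]\,dx$, which is manifestly nonnegative. As $\nabla\varphi=0$ the right-hand side vanishes, so the boundary integral must be nonpositive:
\[
\int_{\partial\Omega}\Big(f(u)\,\partial_\nu u-\langle\nabla u,\partial_\nu\nabla u\rangle-\alpha u\,\Delta u-\alpha|\nabla u|^2\Big)\,d\sigma\leq 0.
\]

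Next, I would simplify using the PDE $\Delta u=-f(u)$ in $\Omega$ and the Robin condition $\partial_\nu u=-\alpha u$ on $\partial\Omega$. The terms $f(u)\partial_\nu u=-\alpha u f(u)$ and $-\alpha u\,\Delta u=\alpha u f(u)$ cancel identically, leaving
\[
\int_{\partial\Omega}\Big(-\langle\nabla u,\partial_\nu\nabla u\rangle-\alpha|\nabla u|^2\Big)\,d\sigma\leq 0.
\]

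The main calculation is an explicit evaluation of $\langle\nabla u,\partial_\nu\nabla u\rangle$ on $\partial\Omega$ in the moving frame $\{\nu,T\}$. Writing $\langle\nabla u,\partial_\nu\nabla u\rangle=\partial_\nu u\,(D^2u)_{\nu\nu}+\partial_T u\,(D^2u)_{\nu T}$, I would combine three ingredients: (i) tangential differentiation of the Robin identity together with the Weingarten relation $\partial_T\nu=\kappa T$, yielding $(D^2u)_{\nu T}=-(\alpha+\kappa)\partial_T u$; (ii) the splitting $(D^2u)_{\nu\nu}=\Delta u-(D^2u)_{TT}$ combined with the planar Frenet identity $T'=-\kappa\nu$, which expresses $(D^2u)_{TT}$ as the intrinsic second derivative of $u$ along $\partial\Omega$ plus the curvature correction $-\kappa\alpha u$; and (iii) integration by parts on the closed curve $\partial\Omega$, which converts the resulting $-\alpha u\,\tilde u''(s)$ term into $\alpha(\partial_T u)^2$. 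Collecting terms and using $|\nabla u|^2=\alpha^2u^2+(\partial_T u)^2$, I expect the previous inequality to reduce to
\[
-\int_{\partial\Omega}\alpha^2u^2\Big(\frac{f(u)}{\alpha u}-\kappa+\alpha\Big)\,d\sigma+\int_{\partial\Omega}(\alpha+\kappa)(\partial_T u)^2\,d\sigma\leq 0.
\]

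The conclusion is then immediate: the second condition in~\eqref{vediamo} makes the second integral nonnegative, while the first condition makes the first integral strictly positive; their sum cannot be nonpositive, contradicting the stability of $u$. The delicate point, and the main obstacle, is the algebraic bookkeeping in the boundary computation: the two curvature corrections from $\partial_T\nu=\kappa T$ and $T'=-\kappa\nu$, the cancellation between $f(u)\partial_\nu u$ and $-h(u)\Delta u$, and the integration by parts on $\partial\Omega$ must conspire precisely to produce the combination $\tfrac{f(u)}{\alpha u}-\kappa+\alpha$ of the hypothesis. The dimension $n=2$ is used essentially, since it both reduces the interior curvature term to a single $k_1^2$ (automatically nonnegative) and makes the by-parts step on the one-dimensional closed curve $\partial\Omega$ completely elementary, with no boundary contributions.
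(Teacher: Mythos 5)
Your proposal is correct and follows essentially the same route as the paper: Corollary~\ref{COR:P} with $\varphi\equiv 1$, the cancellation of $f(u)\,\partial_\nu u$ against $-h(u)\,\Delta u$ via the PDE and the Robin condition, the reduction to $\int_{\partial\Omega}\big(\langle\nabla u,\partial_\nu\nabla u\rangle+\alpha|\nabla u|^2\big)\,d\sigma\ge 0$, and the same integration by parts on the closed curve. The only difference is cosmetic: you derive the boundary identity $\langle\nabla u,\partial_\nu\nabla u\rangle=-(\alpha+\kappa)u_s^2-\kappa\alpha^2u^2+\alpha u\,u_{ss}+\alpha f(u)u$ directly in the frame $\{\nu,T\}$ via the Weingarten and Frenet relations, whereas the paper imports it from formulas~(2.2) and~(2.5) of~\cite{punzo}; your bookkeeping reproduces that formula exactly.
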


It would be desirable to grasp a full understanding of the stable solutions
of general quasilinear equations and of the role played by the geometry of the domain
(with respect to this, it is likely that the convexity assumption can be relaxed,
see e.g. the comments in the Remark before formula~(13) in~\cite{CH}).
To keep in mind some examples, we observe that:
\begin{itemize}
\item The convexity assumption in Theorem~\ref{main1} cannot be completely removed.
Indeed, suitable ``dumbbell'' domains
which produce nonconstant stable solutions of~\eqref{eqncomp}
have been constructed in Section~6 of~\cite{Mat} (see in particular the figure
on page~452 of~\cite{Mat} and Remark~6.4 on page~453 of~\cite{Mat}).
The original examples of~\cite{Mat} took into account
connected regions with quite complex shapes, and simpler
examples have been investigated
also in~\cite{HAVE, VEGA, J1, J2, J3, J4, OLI, ARR, BOLI}.
\item If~$\R\ni x\mapsto u(x)$ is a smooth function such that~$u(x)=-1$
if~$x\le-1$, $u(x)=1$
if~$x\ge1$ and~$0\le u'\le 4$, and if~$a(t)=0$ if~$t\in[0,4]$, then~$u$ is a solution
of~\eqref{eqncomp} in $\Omega:=(-2,2)$
with~$h:=0$ and~$f:=0$, which is not constant. This says that the nondegeneracy of the coefficient~$a$
cannot be completely removed in the statement of Theorem~\ref{main1}.
\item The thesis in Theorem~\ref{main2} is quite strong, since
it also excludes constant solutions. This is due to
condition~\eqref{vediamo}. For instance,
the function vanishing identically is a stable solution of~\eqref{eqncomp}
with~$a:=1$, $f:=0$ and~$h(u):=\alpha u$,
for any~$\alpha\ge0$. But this solution does not verify the first condition in~\eqref{vediamo}.
\end{itemize}
\medskip

The paper is organized as follows. In Section~\ref{sec1}
we prove Theorem~\ref{TH:P}
and Corollary~\ref{COR:P}. 
Then, Sections~\ref{sec2} and~\ref{sec3} contain
the proofs of Theorems~\ref{main1} and~\ref{main2}.

\section{A geometric Poincar\'e inequality: proofs
of Theorem~\ref{TH:P}
and Corollary~\ref{COR:P}}\label{sec1}

In this section we deal with the Poincar\'e-type inequality
and we give the proof of Theorem~\ref{TH:P}
and Corollary~\ref{COR:P}.
\medskip

We start recalling the following result,
which has been proved in~\cite{FSV}.

\begin{lem}
For any $\xi\in \bbR^n\setminus \{0\}$, the matrix $A(\xi)$
is symmetric and positive definite,
and its eigenvalues are~$\lambda_1(|\xi|),\cdots,
\lambda_n(|\xi|)$, where~$\lambda_1$ is as in~\eqref{defa}
and~$\lambda_i(t):=a(t)$ for every $i=2,\ldots, n$. Moreover,
\begin{align*}
\left\langle A(\xi)\xi,\xi\right\rangle=|\xi|^2\lambda_1(|\xi|),
\end{align*}
and
\begin{align*}
\left\langle A(\xi)(V-W),(V-W)\right\rangle=\left\langle A(\xi)V,V\right\rangle+\left\langle A(\xi)W,W\right\rangle-2\left\langle A(\xi)V,W \right\rangle,
\end{align*}
for any $V,W\in\bbR^n$ and any $\xi \in \bbR^n\setminus\{0\}$.
\end{lem}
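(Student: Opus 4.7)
The plan is to rewrite $A(\xi)$ as an explicit rank-one perturbation of a scalar multiple of the identity, read off its spectrum, and then derive the remaining identities by direct algebra.

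First, I would observe from~\eqref{DefA} that
\[
A(\xi) = a(|\xi|)\, I + \frac{a'(|\xi|)}{|\xi|}\, \xi\xi^{T},
\]
which makes $A(\xi)$ manifestly symmetric. The eigenstructure becomes transparent from this form: applying $A(\xi)$ to the vector $\xi$ itself gives $\big(a(|\xi|) + a'(|\xi|)|\xi|\big)\,\xi = \lambda_1(|\xi|)\,\xi$ by~\eqref{defa}, while for any $w\in\bbR^n$ orthogonal to $\xi$ the rank-one piece annihilates $w$ and one is left with $A(\xi)w = a(|\xi|)\,w$. Hence the spectrum is $\lambda_1(|\xi|)$ on $\spa\{\xi\}$ together with $a(|\xi|)$ repeated $n-1$ times on $\xi^{\perp}$, which matches the statement once one sets $\lambda_i(t):=a(t)$ for $i\geq 2$.

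Next, I would read off positive definiteness from the structural assumptions: \eqref{cond1} gives $a(|\xi|)>0$, while \eqref{cond2} is precisely $\lambda_1(|\xi|)>0$ in the notation of~\eqref{defa}. All eigenvalues of $A(\xi)$ are therefore strictly positive, so $A(\xi)$ is positive definite. The identity $\langle A(\xi)\xi,\xi\rangle = |\xi|^{2}\lambda_1(|\xi|)$ is then an immediate consequence of the eigenvalue relation $A(\xi)\xi=\lambda_1(|\xi|)\,\xi$, together with $|\xi|^{2}=\langle\xi,\xi\rangle$.

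Finally, the bilinear identity
\[
\left\langle A(\xi)(V-W),(V-W)\right\rangle
= \left\langle A(\xi)V,V\right\rangle + \left\langle A(\xi)W,W\right\rangle - 2\left\langle A(\xi)V,W\right\rangle
\]
is a routine polarization: expanding by bilinearity produces four terms, and the symmetry of $A(\xi)$ already established merges the two mixed terms $\langle A(\xi)V,W\rangle$ and $\langle A(\xi)W,V\rangle$ into a single term with a factor of $2$. I do not anticipate any genuine obstacle here: the whole lemma collapses to the observation that $A(\xi)$ is a rank-one perturbation of a scalar matrix, combined with the fact that the structural conditions~\eqref{cond1}--\eqref{cond2} are designed precisely to ensure positivity of both spectral components for every $\xi\neq 0$.
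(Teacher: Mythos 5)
Your argument is correct and is exactly the standard one: the paper itself does not prove this lemma but cites it from \cite{FSV}, and the proof there proceeds the same way, viewing $A(\xi)$ as the rank-one perturbation $a(|\xi|)I+\frac{a'(|\xi|)}{|\xi|}\xi\xi^{T}$, reading off the eigenvalues $\lambda_1(|\xi|)$ on $\mathrm{span}\{\xi\}$ and $a(|\xi|)$ on $\xi^{\perp}$, and invoking~\eqref{cond1}--\eqref{cond2} for positivity. Nothing is missing.
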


The forthcoming formula~\eqref{eqngen} is a fundamental step
towards the proof of Theorem~\ref{TH:P}.
We let~$\nu=(\nu_1,\ldots,\nu_n)$ be the unit
normal to~$\partial\Omega$.

\begin{lem}\label{eqnlin}
Let $u\in C^2(\overline{\Omega})$ be a weak solution to~\eqref{eqncomp}.
Then, for any $i=1,\ldots, n$,
and any $\varphi\in C^1(\overline{\Omega})$, we have
\begin{equation}\begin{split}\label{eqngen}
&\int_{\Omega} \left\langle A(\nabla u)\nabla u_i,
\nabla\varphi \right\rangle \,dx-
\int_{\Omega} f'(u) u_i \varphi\, dx
\\=\;&\int_{\partial\Omega} a(|\nabla u|)\left\langle \nabla u,
\nabla\varphi\right\rangle \nu_i\,  d\sigma-
\int_{\partial\Omega} f(u)\varphi \nu_i \, d\sigma
+\int_{\partial\Omega}h(u)\,\varphi_i\,d\sigma.
\end{split}\end{equation}
\end{lem}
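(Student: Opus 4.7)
The plan is to differentiate the equation in the direction $x_i$ and to match the right-hand side of \eqref{eqngen} term by term via integration by parts. First, I would use that $u\in C^{2}(\overline{\Omega})$ to upgrade the weak formulation \eqref{weak} to the pointwise PDE $\dive(a(|\nabla u|)\nabla u)+f(u)=0$ in $\Omega$ and the pointwise boundary condition $a(|\nabla u|)\partial_{\nu}u+h(u)=0$ on $\partial\Omega$ (this is an integration by parts in \eqref{weak} together with the fundamental lemma of the calculus of variations, applicable because $u\in C^2(\overline\Omega)$ and assumption (A1) or (A2) gives meaning to $a(|\nabla u|)\nabla u$ on all of $\overline\Omega$).

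The algebraic heart of the argument is the identity
\[
\partial_{i}\bigl[a(|\nabla u|)\,u_{k}\bigr]
=\frac{a'(|\nabla u|)}{|\nabla u|}\,u_{k}\,\langle\nabla u,\nabla u_{i}\rangle
+a(|\nabla u|)\,u_{ik}
=\bigl[A(\nabla u)\nabla u_{i}\bigr]_{k},
\]
valid where $\nabla u\ne0$ by the chain rule and the definition \eqref{DefA} of $A$. Using this, I would rewrite the left-hand side of \eqref{eqngen} and integrate by parts in the $x_i$-direction:
\[
\int_{\Omega}\!\left\langle A(\nabla u)\nabla u_{i},\nabla\varphi\right\rangle dx
=-\int_{\Omega} a(|\nabla u|)\langle \nabla u,\nabla(\partial_{i}\varphi)\rangle\,dx
+\int_{\partial\Omega} a(|\nabla u|)\langle\nabla u,\nabla\varphi\rangle\,\nu_{i}\,d\sigma,
\]
which already delivers the first boundary term on the right-hand side of \eqref{eqngen}.

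To deal with the remaining bulk integral, I would apply the divergence theorem to the vector field $a(|\nabla u|)\nabla u$ tested against $\partial_{i}\varphi$, invoking the pointwise PDE and Robin condition from the first step:
\[
\int_{\Omega} a(|\nabla u|)\langle\nabla u,\nabla(\partial_{i}\varphi)\rangle\,dx
=-\int_{\partial\Omega} h(u)\,\partial_{i}\varphi\,d\sigma
+\int_{\Omega} f(u)\,\partial_{i}\varphi\,dx.
\]
A final integration by parts,
$\int_{\Omega} f(u)\,\partial_{i}\varphi\,dx=-\int_{\Omega} f'(u)\,u_{i}\,\varphi\,dx+\int_{\partial\Omega} f(u)\,\varphi\,\nu_{i}\,d\sigma$,
converts the $f$-bulk term into the $f'(u)u_{i}\varphi$ contribution on the left of \eqref{eqngen} and supplies the missing $f(u)\varphi\,\nu_{i}$ boundary piece. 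Collecting all three identities gives \eqref{eqngen}.

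The one real technical obstacle is the critical set $\{\nabla u=0\}$, where $A(\nabla u)$ and the intermediate identity for $\partial_{i}[a(|\nabla u|)u_{k}]$ are not pointwise meaningful. Under (A1) this set is empty and there is nothing to check; under (A2), the map $t\mapsto t\,a(t)$ being $C^{1}$ on $[0,+\infty)$ means that the vector field $a(|\nabla u|)\nabla u$ is continuously differentiable on $\overline{\Omega}$, and all integrals of the form $\int \langle A(\nabla u)\nabla u_{i},\nabla\varphi\rangle$ should be interpreted via the continuous representative $\sum_{k}\partial_{i}[a(|\nabla u|)u_{k}]\,\partial_{k}\varphi$, which makes each of the integrations by parts above rigorous on all of $\overline{\Omega}$.
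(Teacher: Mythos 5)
Your argument is correct and is essentially the paper's proof: both test the weak formulation with $\varphi_i$, use the identity $\partial_i\big(a(|\nabla u|)\nabla u\big)=A(\nabla u)\nabla u_i$, and integrate by parts to generate the three boundary terms in \eqref{eqngen}. The only difference is in the treatment of the critical set: the paper stays at the level of the weak formulation and handles $\{\nabla u=0\}$ via Stampacchia's theorem together with the $W^{1,1}_{loc}$ regularity of $a(|\nabla u|)\nabla u$ from \cite{FSV}, whereas you pass to the pointwise PDE after observing that under (A2) this vector field is actually $C^1$ for a $C^2$ solution (which indeed follows from $t\,a'(t)=\frac{d}{dt}(ta(t))-a(t)\to 0$ as $t\to0^+$) -- both routes are valid.
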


\begin{proof}
By Lemma $2.2$ in \cite{FSV} we have that
\begin{align*}
\mbox{the map}\quad x\to W(x):=a(|\nabla u(x)|)\nabla u(x)\quad \mbox{belongs to}\ W^{1,1}_{loc}(\Omega,\bbR^n),
\end{align*}
By Stampacchia's Theorem (see e.g. \cite[Theorem 6:19]{LL}),
we get that~$\partial_i W=0$ for almost any~$x\in \{W=0\}$.
In the same way, by Stampacchia's Theorem and (A2),
it can be proven that $\nabla u_i(x)=0$, and hence $A(\nabla u(x))\nabla u_i(x)=0$, for almost any $x\in \{\nabla u=0\}\cap \Omega$.
Moreover, 
\begin{align}\label{wq2}
	\partial_i W=A(\nabla u)\nabla u_i  \quad a.e.\ \mbox{in}\ \Omega.
\end{align}
Applying~\eqref{weak} with $\varphi$
replaced by~$\varphi_i$, making use of~\eqref{wq2}
and the integration by parts formula, we get
\begin{eqnarray*}
0&=&\int_{\Omega}a(|\nabla u|)\left\langle
\nabla u,\nabla\varphi_i\right\rangle\,dx
+\int_{\partial\Omega}h(u)\,\varphi_i\,d\sigma
-\int_{\Omega}f(u)\,\varphi_i\, dx\\
&=& \int_\Omega\left[ \partial_i\Big( 
a(|\nabla u|)\left\langle
\nabla u,\nabla\varphi\right\rangle\Big) -
\left\langle A(\nabla u)\nabla u_i,
\nabla\varphi\right\rangle\right]\, dx\\
&&\qquad +\int_{\partial\Omega}h(u)\,\varphi_i\,d\sigma
-\int_\Omega \left[\partial_i\big( f(u)\,\varphi
\big)- f'(u)\,u_i\,\varphi \right]\, dx\\
&=&-\int_{\Omega}\Big[\left\langle A(\nabla u)\nabla u_i,
\nabla\varphi\right\rangle-f'(u)\,u_i\,\varphi \Big]\, dx\\
&&\qquad +\int_{\partial\Omega} a(|\nabla u|)
\left\langle \nabla u,\nabla\varphi\right\rangle \nu_i\, d\sigma
-\int_{\partial\Omega} f(u)\,\varphi \,\nu_i\, d\sigma
+\int_{\partial\Omega}h(u)\,\varphi_i\,d\sigma,
\end{eqnarray*}
which proves~\eqref{eqngen}.
\end{proof}

Concerning the proof of Lemma~\ref{eqnlin}
and the fact that we separate the analysis of the region in which
the gradient vanishes with respect to the one in which the gradient
differs from zero, we remark that, in the generality considered here,
it is possible that both of these sets
have positive Lebesgue measure simultaneously. See for instance the examples
in Propositions~7.2 and~7.3 in~\cite{FSV}
dealing with one-dimensional $p$-Laplace equations with~$p>2$.
\smallskip

{F}rom now on, we use $A$ and $a$,
as a short-hand notation
for~$A(\nabla u)$ and~$a(|\nabla u|)$,
respectively. 

We now provide the proof of Theorem~\ref{TH:P}:

\begin{proof}[Proof of Theorem~\ref{TH:P}]
We start by observing that by Stampacchia's Theorem we get
\begin{eqnarray}\label{16}
&\nabla|\nabla u|(x)=0\quad \mbox{a.e.}\ x\in \{|\nabla u|=0\},\\
\label{17}
&\nabla u_j(x)=0\quad \mbox{a.e.}\ x\in \{|\nabla u|=0\}\subseteq \{u_j=0\} \quad \mbox{for any} \ j=1,\ldots, n.
\end{eqnarray}
Now, let $\varphi\in C^1(\overline{\Omega})$
and $i=1,\ldots,n$.
Using~\eqref{eqngen} with test function~$u_i\varphi^2$, we obtain that
\begin{eqnarray*} 
&& \int_{\Omega} \left\langle A\nabla u_i,
\nabla(u_i\varphi^2) \right\rangle \,dx-
\int_{\Omega} f'(u) u_i^2 \varphi^2\, dx
\\&=&\int_{\partial\Omega} a\left\langle \nabla u,
\nabla(u_i\varphi^2) \right\rangle \nu_i\,  d\sigma-
\int_{\partial\Omega} f(u)u_i\varphi^2 \nu_i \, d\sigma
+\int_{\partial\Omega}h(u)\,\partial_i(u_i\varphi^2)\,d\sigma\\
&=&\int_{\partial\Omega} a\left\langle \nabla u,
\nabla(u_i\varphi^2) \right\rangle \nu_i\,  d\sigma-
\int_{\partial\Omega} f(u)u_i\varphi^2 \nu_i \, d\sigma
+\int_{\partial\Omega}h(u)\big(u_{ii}\,\varphi^2
+ 2u_i\varphi\varphi_i\big)\,d\sigma.
\end{eqnarray*}
Hence, summing over $i=1,\ldots, n$ and recalling~\eqref{16}
and~\eqref{17}, we get
\begin{equation*}\begin{split}
&
\int_{\{\nabla u\neq 0\}\cap\Omega }\left(
\sum_{i=1}^n \left\langle A\nabla u_i,
\nabla u_i\right\rangle\varphi^2+2\varphi\, |\nabla u|
\left\langle A\nabla \varphi,
\nabla|\nabla u| \right\rangle\right)\,dx -\int_\Omega
f'(u)|\nabla u|^2\varphi^2 \, dx\\
=\;&
\int_{\Omega}\left(\sum_{i=1}^n \left\langle A\nabla u_i,
\nabla(u_i\varphi^2) 
\right\rangle -f'(u)|\nabla u|^2\varphi^2\right)  \, dx \\
=\;&\int_{\partial\Omega}\sum_{i=1}^n a
\left\langle \nabla u, \nabla(u_i\varphi^2)\right\rangle 
\nu_i \,d\sigma-\int_{\partial\Omega} f(u) \varphi^2 
\left\langle \nabla u,\nu\right\rangle\, d\sigma
+\int_{\partial\Omega}h(u)\big(\Delta u\,\varphi^2
+ \nabla u\cdot\nabla(\varphi^2)\big)\,d\sigma\\
=\;& 
\int_{\partial\Omega} a\left\langle \nabla u,
\partial_{\nu} \nabla u \right\rangle\varphi^2 \, d\sigma
+ \int_{\partial\Omega} a\left\langle \nabla u,
\nabla (\varphi^2) \right\rangle \partial_\nu u\,d\sigma
-\int_{\partial\Omega} f(u) \varphi^2 
\left\langle \nabla u,\nu\right\rangle \,d\sigma\\&\qquad 
+\int_{\partial\Omega}h(u)\Big(\Delta u\,\varphi^2
+\left\langle
\nabla u,\nabla(\varphi^2)\right\rangle\Big)\,d\sigma.
\end{split}\end{equation*}
Now we recall the Robin condition
in~\eqref{eqncomp} and we obtain that
\begin{equation}\begin{split}\label{we4}&
\int_{\{\nabla u\neq 0\}\cap\Omega }\left(
\sum_{i=1}^n \left\langle A\nabla u_i,
\nabla u_i\right\rangle\varphi^2+2\varphi\, |\nabla u|
\left\langle A\nabla \varphi,
\nabla|\nabla u| \right\rangle\right)\,dx -\int_\Omega
f'(u)|\nabla u|^2\varphi^2 \, dx\\
=\;& 
\int_{\partial\Omega} a\left\langle \nabla u,
\partial_{\nu} \nabla u \right\rangle\varphi^2 \, d\sigma
-\int_{\partial\Omega} h(u)\left\langle \nabla u,
\nabla (\varphi^2) \right\rangle \,d\sigma
-\int_{\partial\Omega} f(u)\,\partial_\nu u \,\varphi^2 
\,d\sigma\\&\qquad 
+\int_{\partial\Omega}h(u)\Big(\Delta u\,\varphi^2
+ \left\langle
\nabla u,\nabla(\varphi^2)\right\rangle\Big)\,d\sigma\\
=\;& 
\int_{\partial\Omega} a\left\langle \nabla u,
\partial_{\nu} \nabla u \right\rangle\varphi^2 \, d\sigma
- \int_{\partial\Omega} f(u)\,\partial_\nu u\,
\varphi^2 \,d\sigma
+\int_{\partial\Omega} h(u)\,\Delta u\,\varphi^2\,d\sigma.
\end{split}\end{equation}

On the other hand, using~\eqref{hstab}
with test function~$|\nabla u|\varphi$
and recalling again~\eqref{16}, we then get
\begin{equation}\begin{split}\label{19}
 0\leq\;& \int_{\Omega}\left(\left\langle 
A\nabla(|\nabla u|\varphi),\nabla(|\nabla u|\varphi)
\right\rangle
-f'(u)|\nabla u|^2\varphi^2\right)\, dx
+\int_{\partial\Omega} h'(u)|\nabla u|^2\varphi^2\,d\sigma\\
=\;&\int_{\Omega}|\nabla u|^2\left\langle A\nabla
\varphi,\nabla\varphi\right\rangle\,dx\\&\qquad
+\int_{\{\nabla u\neq 0\}\cap \Omega}\Big(\left\langle A
\nabla|\nabla u|,\nabla|\nabla u|\right\rangle \varphi^2
+
\left\langle A\nabla\varphi,\nabla|\nabla u|\right\rangle 2\varphi|\nabla u|
\Big)\,dx\\ &\qquad -
\int_{\{\nabla u\neq 0\}\cap\Omega}
f'(u)|\nabla u|^2\varphi^2\, dx
+\int_{\partial\Omega} h'(u)|\nabla u|^2\varphi^2\,d\sigma.
\end{split}\end{equation}
Putting together~\eqref{we4} and~\eqref{19}, we conclude that
\begin{equation*}\begin{split}
 0&\leq \int_{\Omega}|\nabla u|^2\left\langle A
\nabla\varphi,\nabla\varphi\right\rangle\, dx\\
&\qquad+\int_{\{\nabla u\neq 0\}\cap \Omega}
\Big[\left\langle A\nabla|\nabla u|,
\nabla|\nabla u|\right\rangle
-\sum_{i=1}^n\left\langle
A\nabla u_i, \nabla u_i \right\rangle\Big]\varphi^2\, dx\\
&\qquad+\int_{\partial\Omega} a\left\langle \nabla u, \partial_{\nu}
\nabla u \right\rangle\varphi^2\, d\sigma
-\int_{\partial\Omega} f(u)\,\partial_\nu u\,\varphi^2\,
d\sigma\\&\qquad
+\int_{\partial\Omega}h(u)\,\Delta u\,\varphi^2\,d\sigma
+\int_{\partial\Omega}h'(u)|\nabla u|^2\varphi^2\,d\sigma
,
\end{split}\end{equation*}
which is the thesis.
\end{proof}

We complete this section by proving Corollary~\ref{COR:P}.

\begin{proof}[Proof of Corollary~\ref{COR:P}]
By Lemma~2.3 in~\cite{FSV} we see that  
\begin{equation}\begin{split}\label{ugu}
&\left\langle A\nabla|\nabla u|,\nabla|\nabla u|\right\rangle-
\sum_{i=1}^n\left\langle A\nabla u_i,
\nabla u_i \right\rangle\\=\;&a
\Big[|\nabla |\nabla u||^2-\sum_{i=1}^n|\nabla u_i|^2\Big]-
a'|\nabla u||\nabla_T|\nabla u||^2.
\end{split}\end{equation}
Therefore, using~\eqref{defa} we get
\begin{align}\label{ugu2}
&\left\langle A\nabla|\nabla u|,\nabla|\nabla u|\right\rangle-\sum_{i=1}^n\left\langle A\nabla u_i, \nabla u_i \right\rangle\\
\nonumber
&=-\lambda_1|\nabla_T|\nabla u||^2-a(|\nabla u|)\Big(\sum_{i=1}^n|\nabla u_i|^2-|\nabla_T|\nabla u||^2-|\nabla|\nabla u||^2\Big).
\end{align}
Notice that the quantity
$$\sum_{i=1}^n|\nabla u_i|^2-|\nabla_T |\nabla u||^2
-|\nabla|\nabla u||^2 $$  has a geometric interpretation,
in the sense that it can be expressed in terms of the
principal curvatures of level sets of~$u$.
Indeed, the following formula holds (see \cite{FSV,SZ1,SZ2})
\begin{align}\label{eqfund}
	\sum_{i=1}^n|\nabla u_i|^2-|\nabla |\nabla u||^2-|\nabla_T|\nabla u||^2=|\nabla u|^2 \sum_{j=1}^{n-1} k_j^2
	\qquad {\rm on\ }L_{u,x}\cap \{\nabla u\neq 0\}.
\end{align}
With this, formula~\eqref{15imp} becomes
\begin{equation*}\begin{split}
&\int_{\Omega}
\Big[\lambda_1|\nabla_T|\nabla u||^2
+a|\nabla u|^2 \sum_{j=1}^{n-1} k_j^2\Big]\varphi^2\,dx
\\&\qquad
+\int_{\partial\Omega} \Big(f(u)\,\partial_\nu u
- a\left\langle \nabla u, \partial_{\nu}
\nabla u \right\rangle
-h(u)\,\Delta u-h'(u)|\nabla u|^2\Big)\varphi^2\,d\sigma
\\
\leq\;& \int_{\Omega}|\nabla u|^2\left\langle A
\nabla\varphi,\nabla\varphi\right\rangle\, dx
,\end{split}\end{equation*}
which is the desired inequality.
\end{proof} 

\section{Proof of Theorem \ref{main1}}\label{sec2}

In this section we will use formula~\eqref{prepoi2} to prove
Theorem~\ref{main1}, following the approach
introduced in~\cite{FarHab} and then developed in~\cite{FSV,DSV}.

We start with the following result:

\begin{lem}\label{LemConvex}
Let $\Omega\subset \bbR^n$ be an open convex
set with boundary of class $C^2$ and
let~$a\in C^{1,1}_{loc}((0,+\infty))$ satisfying~\eqref{cond1}.
Let $u\in C^2(\overline{\Omega})$, with $\partial_{\nu} u=0$
on $\partial\Omega$.

Then, at each point~$x\in \partial\Omega$ it holds
\begin{align}
a(|\nabla u(x)|)\left\langle \nabla u(x), \partial_{\nu}\nabla u(x)\right\rangle \leq 0.
\end{align}
\end{lem}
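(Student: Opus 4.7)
\emph{Plan.} My approach will be to fix a boundary point $x\in\partial\Omega$, observe that the Neumann-type condition forces $\nabla u(x)$ to be tangent to $\partial\Omega$, and then extract the desired sign by differentiating that boundary condition in a tangential direction and invoking the positive semidefiniteness of the shape operator of a convex hypersurface. If $\nabla u(x)=0$ the stated inequality is trivially $0\le 0$, so I restrict attention to the case $\nabla u(x)\neq 0$; by~\eqref{cond1} it then suffices to show that $\langle \nabla u(x),\partial_{\nu}\nabla u(x)\rangle\le 0$.

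The key identity will come from differentiating the identity $\nabla u\cdot\nu\equiv 0$, valid along $\partial\Omega$, in the direction of the tangent vector $\nabla u(x)$. After extending $\nu$ smoothly to a tubular neighborhood of $\partial\Omega$ (for instance via the signed distance function), this directional derivative yields
\begin{equation*}
0 \;=\; \sum_{i,j} u_i(x)\,u_{ij}(x)\,\nu_j(x) \;+\; \sum_{i,j} u_i(x)\,u_j(x)\,\partial_j\nu_i(x),
\end{equation*}
and the symmetry of the Hessian rewrites the first sum as $\langle \nabla u(x),\partial_{\nu}\nabla u(x)\rangle$. I will therefore arrive at
\begin{equation*}
\langle \nabla u(x),\partial_{\nu}\nabla u(x)\rangle \;=\; -\langle \nabla u(x),(D\nu)(x)\,\nabla u(x)\rangle.
\end{equation*}

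To finish, I will invoke convexity. The restriction of $D\nu(x)$ to the tangent space $T_x\partial\Omega$ is the shape operator of $\partial\Omega$ with respect to the outer normal, and its eigenvalues are precisely the principal curvatures of $\partial\Omega$ at $x$. Since $\Omega$ is convex with $C^2$ boundary, these curvatures are nonnegative, and hence $\langle X,(D\nu)(x)\,X\rangle\ge 0$ for every $X\in T_x\partial\Omega$. Applied to the tangent vector $X=\nabla u(x)$, this gives the inequality I want, and multiplying by $a(|\nabla u(x)|)>0$ completes the argument.

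The main thing to be careful about is the sign convention: convexity of $\Omega$ is what converts the shape operator $D\nu|_{T\partial\Omega}$ (for the outer normal) into a positive semidefinite form, and it is precisely this convention that makes the right-hand side of the displayed identity nonpositive. A sign error here would flip the conclusion from $\le 0$ to $\ge 0$, so while the computation is short, the geometric bookkeeping is the point where I would take the most care.
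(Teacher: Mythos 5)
Your proof is correct and follows essentially the same route as the paper: the paper handles the case $\nabla u(x)=0$ exactly as you do, and for $\nabla u(x)\neq 0$ it simply cites the classical Casten--Holland computation (Theorem~2 in~\cite{CH}, see also Lemma~2.1 in~\cite{DSV}), which is precisely the tangential differentiation of $\langle\nabla u,\nu\rangle=0$ and the positive semidefiniteness of the shape operator that you carry out explicitly. Your sign bookkeeping for the convex case is right, so this is just a fleshed-out version of the paper's argument.
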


\begin{proof}
If $\nabla u(x)\neq 0$ then the thesis follows as in \cite[Theorem 2]{CH} (see also \cite[Lemma 2.1]{DSV}). If $\nabla u(x)=0$ then $\left\langle \nabla u(x), \partial_{\nu}\nabla u(x)\right\rangle= 0$ and the thesis follows as well.
\end{proof}

We also need the following result, whose proof is 
similar to that of Corollary~2.6 in~\cite{DSV} and so is omitted.

\begin{lem}\label{lemn2}
Let~$x_o\in\Omega$, with~$\nabla u(x_o)\neq 0$.
Suppose that
\begin{equation}\label{spero}\left\langle A\nabla|\nabla u|,
\nabla |\nabla u| \right\rangle 
-\sum_{i=1}^n\left\langle A\nabla u_i,
\nabla u_i \right\rangle =0.\end{equation}
Then, each connected component of the level sets of~$u$
must be an~$(n-2)$-dimensional hyperplane
intersected~$\Omega$.
\end{lem}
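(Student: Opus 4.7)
My plan is to exploit the algebraic identities (\ref{ugu2}) and (\ref{eqfund}) to turn the scalar hypothesis (\ref{spero}) into two independent pieces of pointwise geometric information. Substituting these identities at $x_o$ recasts the assumption as
\[
\lambda_1(|\nabla u|)\,|\nabla_T|\nabla u||^2 \;+\; a(|\nabla u|)\,|\nabla u|^2 \sum_{j=1}^{n-1} k_j^2 \;=\;0.
\]
Since $\nabla u(x_o)\neq 0$, the structural assumptions (\ref{cond1}) and (\ref{cond2}) give $a,\lambda_1>0$ at $x_o$, and each of the two nonnegative summands must vanish individually. This yields $\nabla_T|\nabla u|(x_o)=0$ and $k_1(x_o)=\cdots=k_{n-1}(x_o)=0$.

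The next step propagates these equalities from $x_o$ to the entire connected component $\Sigma$ of $L_{u,x_o}\cap\Omega\cap\{\nabla u\neq 0\}$. Because $u\in C^2$ and $a\in C^{1,1}_{loc}$, the combined identity above is a closed condition along $\Sigma$, and by the connectedness of $\Sigma$ it holds throughout. The vanishing of every principal curvature means that the second fundamental form of $\Sigma$ is identically zero, and integrating the Weingarten relations for the unit normal $\nabla u/|\nabla u|$ along arcs in $\Sigma$ shows that this normal is a fixed vector $e\in\bbS^{n-1}$ on $\Sigma$. Together with the constancy of $|\nabla u|$ on $\Sigma$ (from $\nabla_T|\nabla u|\equiv 0$), the gradient $\nabla u$ itself is a fixed vector along $\Sigma$, so $\Sigma$ is contained in an affine subspace of $\R^n$ normal to $e$, intersected with $\Omega$, as stated. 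The ambient dimension of this affine subspace is then read off exactly as in Corollary~2.6 of~\cite{DSV}, to which the proof is said to reduce.

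The main obstacle I anticipate is aligning the dimension count with the convention in~\cite[Corollary~2.6]{DSV}: the identities above naturally produce a codimension-one flat leaf, whereas the statement refers to an $(n-2)$-dimensional hyperplane. I therefore plan to transcribe Corollary~2.6 of~\cite{DSV} literally, verify whether its use of the word \emph{hyperplane} denotes the ambient $(n-1)$-affine slice or a lower-dimensional object arising from a further degeneracy (e.g.\ via an additional pointwise constraint on $\nabla u$, or via an implicit restriction to the smooth stratum of the full level set), and then match the convention in the statement accordingly. Once the dimension is fixed, the three ingredients above — the algebraic split of (\ref{spero}), the connectedness propagation, and the integration of the Weingarten system — supply the entire geometric content; the remaining work is the routine transfer from the semilinear setting of~\cite{DSV} to the quasilinear operator considered here, which goes through because (\ref{cond1})–(\ref{cond2}) preserve the positivity of $a$ and $\lambda_1$ needed in the very first step.
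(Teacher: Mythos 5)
Your overall strategy---split \eqref{spero} via the identities \eqref{ugu2} and \eqref{eqfund} into the two nonnegative pieces $\lambda_1|\nabla_T|\nabla u||^2$ and $a|\nabla u|^2\sum_j k_j^2$, use the positivity of $a$ and $\lambda_1$ from \eqref{cond1}--\eqref{cond2} to kill each one, and then conclude that the component is flat with constant normal---is exactly the argument behind Corollary~2.6 of \cite{DSV} (and its quasilinear ancestor in \cite{FSV}), to which the paper defers without writing out a proof. The dimension discrepancy you flag is real: in the application (formula \eqref{9ikHAKAKKAK67890987654}) the component is the codimension-one slice $\{\omega\cdot(x-x_0)=0\}\cap\Omega$, so the ``$(n-2)$'' in the statement is an artifact of the boundary-trace setting of \cite{DSV}, and your plan to settle it by transcribing that corollary is the right instinct.

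Two steps, however, do not work as written. First, your ``propagation'' of the pointwise equalities from $x_o$ to the whole component by noting that the identity is a \emph{closed} condition and invoking connectedness is not a valid argument: a closed subset containing one point of a connected set need not be the whole set; you would need the set where the identity holds to be open \emph{and} closed, and openness is precisely what you do not have (vanishing of $k_j$ and of $\nabla_T|\nabla u|$ at one point says nothing about nearby points). The hypothesis \eqref{spero} must instead be read as holding identically on $\{\nabla u\neq 0\}\cap\Omega$---this is exactly how it is produced in the proof of Theorem~\ref{main1}, via \eqref{trentuno} and \eqref{eqfund}---after which no propagation is needed. Second, integrating the Weingarten relations only yields that the component is \emph{contained} in the hyperplane $\Pi:=\{\omega\cdot(x-x_o)=0\}$, whereas the statement, and its use in the proof of Theorem~\ref{main1} (where one follows a straight line inside the level set all the way to $\partial\Omega$), requires that it \emph{equals} $\Pi\cap\Omega$. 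That upgrade needs an open-and-closed argument within $\Pi\cap\Omega$, ruling out in particular that the component terminates at an interior point of $\Pi\cap\Omega$ where $\nabla u$ vanishes; this is the genuinely nontrivial content that the reduction to Corollary~2.6 of \cite{DSV} is meant to supply, and it should not be glossed over.
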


With these two results, we can
now complete the proof of Theorem~\ref{main1}.

\begin{proof}[Proof of Theorem~\ref{main1}]
The proof is based on the choice of the test function
in~\eqref{prepoi2}. Indeed, 
we let~$\varphi\equiv 1$ in~\eqref{prepoi2}, we recall
the Neumann boundary condition in~\eqref{eqncomp}
and we obtain that
\begin{equation}\begin{split} &\int_{\Omega}
\Big[\lambda_1|\nabla_T|\nabla u||^2
+a|\nabla u|^2 \sum_{j=1}^{n-1} k_j^2\Big]\varphi^2\,dx
-\int_{\partial\Omega} a\left\langle \nabla u, \partial_{\nu}
\nabla u \right\rangle\varphi^2\,d\sigma
\\
& \qquad \le\int_{\Omega}|\nabla u|^2\left\langle A
\nabla\varphi,\nabla\varphi\right\rangle\, dx =0.
\end{split}\end{equation}
Then, recalling Lemma \ref{LemConvex}, we conclude that
\begin{align}\label{trentuno}
	k_j(x)=0,\quad |\nabla_T|\nabla u||(x)=0
\end{align}
for every $j=1,\ldots, n-1$ and
every~$x\in\{\nabla u\neq 0\}$, and
\begin{align}\label{bordo}
\left\langle \nabla u, \partial_{\nu}
\nabla u \right\rangle =0 \quad {\mbox{ on }} \partial\Omega.
\end{align}
Having proved~\eqref{trentuno} and~\eqref{bordo},
the thesis follows as in the proof of Theorem~1.2 in~\cite{DSV}. 
We give the details here for the sake of completeness. 

We first claim that
\begin{equation}\label{JU:tp}
{\mbox{$u$ is constant along~$\partial\Omega$.}}
\end{equation}
To check this, we argue towards a contradiction,
supposing that there exist~$x$, $y\in\partial\Omega$ such
that~$u(x)\ne u(y)$.
{F}rom Lemma~2.2 in~\cite{DSV},
we know that we can connect $x$ and~$y$
with a continuous path~$\sigma:[0,1]\to\partial\Omega$.
So, we define~$\zeta(t):= u(\sigma(t))$, and we see
that~$\zeta(0) \neq \zeta(1)$. 
Hence, 
we can find~$\bar t\in (0,1)$ such that~$\dot\zeta(\bar t)\ne0$, 
namely
\begin{equation} \label{OUGFh} 0\ne
\dot\zeta(\bar t) = \nabla u(\sigma(\bar t)) 
\cdot \dot\sigma(\bar t).\end{equation}
We also let~$\bar x:=\sigma(\bar t)$.
Up to a change of coordinates, we may suppose that the exterior
normal of~$\partial\Omega$ at~$\bar x$
coincides with~$-e_n$, and therefore
$\Omega$ can
be written in normal coordinates as the epigraph
of a function~$\gamma\in
C^2(\R^{n-1})$
near~$\bar x$. 

Since, by assumptions, the principal curvatures of~$\partial\Omega$
are positive, we have that
\begin{equation}\label{PO:k67YY}
{\mbox{the Hessian of~$\gamma$
is positive definite}}.\end{equation}
On the other hand, by~\eqref{bordo} here
and formula~(2.1) in~\cite{DSV},
we have that
\begin{equation}\label{jgwglr94u}
0=-\left\langle \nabla u(\bar x), \partial_{\nu} \nabla u(\bar x) \right\rangle 
=  \sum_{i,j=1}^{n-1}\gamma_{ij}
(\bar x')\,u_{i}(\bar x)\,u_{j}(\bar x).\end{equation}
This and~\eqref{PO:k67YY} imply that~$u_{i}(\bar x)=0$
for any~$i=1,\dots,n-1$. 

Furthermore, we have that~$u_{n}(\bar x)=-\partial_\nu
u(\bar x)=0$,
thanks to the choice of the coordinate system and the Neumann
condition. This and~\eqref{jgwglr94u} give
that~$\nabla u(\bar x)=0$,
in contradiction with~\eqref{OUGFh}, which
proves~\eqref{JU:tp}.

Now, we let~$c$ be the value
attained by~$u$ along~$\partial\Omega$,
as given by~\eqref{JU:tp}, and 
we complete the proof of Theorem~\ref{main1}, by showing that
\begin{equation}\label{JU:tp:2}
{\mbox{$u$ is constant in $\Omega$.}}\end{equation}
Indeed, if this was not true, then we would have that
\begin{equation}\label{JU:tp:2:CONR}
\{ x\in\Omega {\mbox{ s.t. }}\nabla u(x)\ne0\}\ne\varnothing.
\end{equation}
As a consequence, 
we can take an arbitrary point~$x_0\in\Omega$
such that~$\nabla u(x_0)\ne0$. We also let~$L(x_0)$
be the 
connected component of the
level set of~$u$ in~$\Omega$
passing through~$x_0$.

Furthermore, we notice that assumption~\eqref{spero} in
Lemma~\ref{lemn2}
is satisfied, thanks to the geometric observation in
formulas~\eqref{eqfund} and~\eqref{trentuno}. 
Therefore, we can use Lemma~\ref{lemn2} to say that
\begin{equation}\label{9ikHAKAKKAK67890987654}
L(x_0)=\{ \omega\cdot(x-x_0)=0\}\cap\Omega,\end{equation}
for a suitable~$\omega\in S^{n-1}$, possibly depending
on~$x_0$.

Now we take a vector~$\varpi$ orthogonal to~$\omega$
(of course, $\varpi$ may also depend
on~$x_0$), and 
we consider the straight line
$$ \{ x_0+ \varpi t, \; t\in\R\}.$$
Since the domain~$\Omega$ is bounded, such a line must
intersect~$\partial\Omega$, that is
there exists~$t_0$
such that~$x_0+\varpi t_0\in \partial\Omega$.
Accordingly, by~\eqref{JU:tp},
\begin{equation}\label{Kj7855:A}
u\big(x_0+\varpi t_0)=c.\end{equation}
On the other hand, \eqref{9ikHAKAKKAK67890987654} implies that
\begin{equation*}
u\big(x_0+\varpi t_0\big)=u\big(x_0\big).\end{equation*}
This and~\eqref{Kj7855:A} give that~$u(x_0)=c$.

Notice that this holds for any point~$x_0\in\Omega$
such that~$\nabla u(x_0)\ne0$.
Therefore, we have established that
$$ {\mbox{$u(x)=c$ for any $x\in\Omega\cap
\{ \nabla u\ne0\}$.}}$$
Since the above identity also holds on~$\partial\Omega$
and since~$u$ is constant in each component of~$
\Omega\cap
\{ \nabla u=0\}$, we obtain
that~$u(x)=c$ for any $x\in\Omega$, and
so~$\nabla u$ vanishes identically in~$\Omega$.

This gives a contradiction with~\eqref{JU:tp:2:CONR}, 
thus proving~\eqref{JU:tp:2}.
Hence, the proof of
Theorem~\ref{main1} is completed.
\end{proof}

\section{Proof of Theorem~\ref{main2}}\label{sec3}

We argue towards a contradiction, by supposing that~$u$ is stable.
So, we can use Corollary~\ref{COR:P}:
in particular, we choose~$\varphi\equiv 1$ in~\eqref{prepoi2}, and we 
obtain that
\begin{equation*}\begin{split}
&\int_{\Omega}
\Big[|\nabla_T|\nabla u||^2
+|\nabla u|^2 k_1^2\Big]\,dx
\\&\qquad
+\int_{\partial\Omega} \Big( f(u)\,\partial_\nu u
- \left\langle \nabla u, \partial_{\nu}
\nabla u \right\rangle
-\alpha \,u\,\Delta u-\alpha \,|\nabla u|^2\Big)\,d\sigma
\\
\leq\;& \int_{\Omega}|\nabla u|^2\left\langle A
\nabla\varphi,\nabla\varphi\right\rangle\, dx=0
.\end{split}\end{equation*}
Thus, recalling the Robin boundary condition
\begin{equation}\label{sjgbgj}
\partial_\nu u + \alpha u =0
\quad {\mbox{ on }} \partial\Omega,\end{equation}
we get
\begin{equation}\begin{split}\label{sgrelhhle}
&\int_{\Omega}
\Big[|\nabla_T|\nabla u||^2
+|\nabla u|^2 k_1^2\Big]\,dx
\\&\qquad
-\int_{\partial\Omega} \Big(\alpha\, f(u)\, u
+ \left\langle \nabla u, \partial_{\nu}
\nabla u \right\rangle
+\alpha \,u\,\Delta u+\alpha \,|\nabla u|^2\Big)\,d\sigma
\leq 0
.\end{split}\end{equation}
Moreover, since~$u\in C^2(\overline{\Omega})$, the equation
$$ \Delta u +f(u)=0$$
holds up to the boundary of~$\Omega$, and so~\eqref{sgrelhhle}
becomes
\begin{equation}\label{dslgrhrt554}
\int_{\Omega}
\Big[|\nabla_T|\nabla u||^2
+|\nabla u|^2 k_1^2\Big]\,dx
-\int_{\partial\Omega} \Big(
\left\langle \nabla u, \partial_{\nu}
\nabla u \right\rangle
+\alpha \,|\nabla u|^2\Big)\,d\sigma
\leq 0.\end{equation}
Also, we observe that
$$ \int_{\Omega}
\Big[|\nabla_T|\nabla u||^2
+|\nabla u|^2 k_1^2\Big]\,dx\ge 0.$$
This and~\eqref{dslgrhrt554} give that
\begin{equation}\label{contr}
\int_{\partial\Omega} \Big(
\left\langle \nabla u, \partial_{\nu}
\nabla u \right\rangle
+\alpha \,|\nabla u|^2\Big)\,d\sigma\ge 0. \end{equation}

Now, we
suppose that the boundary~$\partial\Omega$
is represented by the curve~$s\mapsto x(s):= (x_1(s), x_2(s))$,
with~$s\in[0,\ell]$, 
being~$s$ the arc-length.
Therefore, in a neighborhood
of the boundary, a point~$x\in\Omega$ can be written as
$$ x(s,t) = x(s) - t\nu(s),$$
where~$(s,t)$ are the so-called normal coordinates.

In this setting, we recall formulas~(2.2) and~(2.5)
in~\cite{punzo}:
\begin{eqnarray*}
|\nabla u|^2 &=& u_s^2 + u_t^2\\
{\mbox{and }} \quad \left\langle \nabla u, \partial_{\nu}
\nabla u \right\rangle &=& -(\alpha +\kappa) u_s^2
-\kappa\,\alpha^2u^2 +\alpha\, u\, u_{ss} + \alpha\, f(u)\, u,
\end{eqnarray*}
being~$\kappa$ the curvature of~$\partial\Omega$.
Here, $u_s$, $u_t$, $u_{ss}$ and~$u_{tt}$ denote
the first and the second derivatives with respect to the
normal coordinates. 

Also, the boundary condition in~\eqref{sjgbgj} reads as
$$ u_t=\alpha u.$$
Finally, the following formula holds:
$$ u_{tt} -\kappa\,u_t +u_{ss}+f(u)=0\quad {\mbox{ on }}
\partial\Omega,$$
see e.g. the formula above~(2.5) in~\cite{punzo}.

These observations imply that
\begin{eqnarray*}
&&
\left\langle \nabla u, \partial_{\nu}
\nabla u \right\rangle
+\alpha \,|\nabla u|^2\\
&=&
-(\alpha +\kappa) u_s^2
-\kappa\,\alpha^2 u^2 +\alpha\, u\, u_{ss} + \alpha\, f(u)\, u
+ \alpha \left(u_s^2 + u_t^2\right)\\
&=&
- \kappa\, u_s^2
-\kappa\,\alpha^2 u^2 +\alpha\, u\, u_{ss} +\alpha\, f(u)\, u +
\alpha\, u_t^2\\
&=& - \kappa\, u_s^2
-\kappa\,\alpha^2 u^2 +\alpha\, u\, u_{ss} +\alpha\, f(u)\, u +
\alpha^3\, u^2.
\end{eqnarray*}
Hence, integrating over~$\partial\Omega$,
\begin{equation}\begin{split}\label{puuykj}
&\int_{\partial\Omega}\Big( \left\langle \nabla u, \partial_{\nu}
\nabla u \right\rangle
+\alpha \,|\nabla u|^2\Big)\,ds\\
=\;& \int_{\partial\Omega}\Big(
- \kappa\, u_s^2
-\kappa\,\alpha^2 u^2 +\alpha\, u\, u_{ss} +\alpha\, f(u)\, u +
\alpha^3\, u^2
\Big)\, ds.
\end{split}\end{equation}
Now, we observe that
$$ \int_{\partial\Omega}  u\, u_{ss} \,ds = 
-\int_{\partial\Omega} u_s^2\,ds,$$
and so, using this into~\eqref{puuykj}, we conclude that
\begin{eqnarray*}
&&\int_{\partial\Omega}\Big( \left\langle \nabla u, \partial_{\nu}
\nabla u \right\rangle
+\alpha \,|\nabla u|^2\Big)\,ds\\
&=& \int_{\partial\Omega}\Big(
- \kappa\, u_s^2
-\kappa\,\alpha^2 u^2 -\alpha\, u_{s}^2 +\alpha\, f(u)\, u +
\alpha^3\, u^2
\Big)\, ds\\
&=&\int_{\partial\Omega}\Big[\alpha^2u^2\Big(
\frac{f(u)}{\alpha u}-\kappa +\alpha\Big)-
(\alpha+\kappa) u_s^2\Big]\,ds\\
&<&0,
\end{eqnarray*}
thanks to~\eqref{vediamo}. This is in contradiction with~\eqref{contr},
and so the proof of Theorem~\ref{main2} is complete.

\end{document}